\theoremstyle{plain}
\newtheorem{theorem}{Theorem}
\newtheorem{lemma}{Lemma}
\newtheorem{proposition}{Proposition}
\numberwithin{equation}{section}
\numberwithin{lemma}{section}
\numberwithin{proposition}{section}
\numberwithin{corollary}{section}
\numberwithin{remark}{section}
\begin{document}

\title{Liouville equations on complete surfaces with nonnegative Gauss curvature}
\author{Xiaohan Cai}
\address{School of Mathematical Sciences, Shanghai Jiao Tong University}
\email{xiaohancai@sjtu.edu.cn}
\author{Mijia Lai}
\address{School of Mathematical Sciences,
Shanghai Jiao Tong University}
\email{laimijia@sjtu.edu.cn}

\thanks{Lai's research is supported by NSFC No. 12031012, No. 12171313 and the Institute of Modern Analysis-A Frontier Research Center of Shanghai. Cai's research is supported by NSFC No.12171313. }
\date{}

\begin{abstract}
We study finite total curvature solutions of the Liouville equation $\Delta u+e^{2u}=0$ on a complete surface $(M,g)$ with nonnegative Gauss curvature. It turns out that the asymptotic behavior of the solution separates two extremal cases: on the one end, if the solution decays not too fast, then $(M,g)$ must be isometric to the standard Euclidean plane; on the other end,  if $(M,g)$ is isometric to the flat cylinder $S^1\times \mathbb{R}$, then solutions must decay linearly and are completely classified. 
\end{abstract}
\maketitle
\section{Introduction}
In the seminal work ~\cite{CL1} of Chen and Li, they obtained the radial symmetry of the solution of 
\begin{align} \label{E1}
    \Delta u+e^{2u}=0 
  \end{align} on $\mathbb{R}^2$,  provided that $  \int_{\mathbb{R}^2} e^{2u} dx<\infty $.
Put the center of symmetry at origin and up to a rescaling, then
\[
u(x)=\ln \left(\frac{2}{1+|x|^2}\right).
\]
The geometric meaning of above equation is that the conformal metric $g=e^{2u}g_0$ has constant Gauss curvature $1$.  It is tempting to think that $g$ is isometric to the standard round sphere. It is indeed true as the solution is the pull back of the round metric via stereogrpahic projection. Nevertheless this line of reasoning is valid only if one establishes the precise asymptotic behavior of $u$ at $\infty$, so that the metric extends to a smooth metric on the sphere from $\mathbb{R}^2$. The reader is refereed to ~\cite{LiTa} for this line of reasoning, see also ~\cite{GL1} for more general equations.  

The assumption $\int_{\mathbb{R}^2} e^{2u} dx<\infty$ is natural since there are infinitely many solutions of (\ref{E1}) with $\int_{\mathbb{R}^2} e^{2u} dx=\infty$. One way to obtain such solution is to pull back the spherical metric via a univalent holomorphic map from $\mathbb{C}$ to $\overline{\mathbb{C}}$. Recently, there appeared some interesting studies on (\ref{E1}) subject to $\int_{\mathbb{R}^2} e^{2u} dx=\infty$. Eremenko-Gui-Li-Xu ~\cite{EGLX} give a complete classification of solutions of (\ref{E1}) which are bounded from above. We also refer to ~\cite{GL2},~\cite{BEL}, ~\cite{L} for some studies on (\ref{E1}) from geometric point of view. 

The story in higher dimension was accomplished even earlier. For $n\geq 3$, let $u$ be a positive solution of 
\begin{align} \label{E2}
\Delta u+ u^{\frac{n+2}{n-2}}=0. 
\end{align}
We refer it as the scalar curvature equation as the conformal metric $g=u^{\frac{4}{n-2}} g_0$ has positive constant scalar curvature. Gidas-Ni-Nirenberg~\cite{GNN} first proved the radial symmetry of the solution under the assumption $u(x)\sim O(|x|^{2-n})$ as $|x|\to \infty$. This can be viewed as an analytical proof of a famous result of Obata on classification of constant scalar curvature metrics which are conformal to an Einstein metric. In a remarkable paper~\cite{CGS}, Caffarelli-Gidas-Spruck established the radial symmetry of the solution without any assumption on the asymptotic behavior of $u$.  

The scalar curvature equation for conformal metrics has critical Sobolev power. In the subcritical case, 
\begin{align}  \label{E3}
\Delta u+u^{p}=0, \quad 1<p<\frac{n+2}{n-2}.
\end{align}
Gidas-Spruck~\cite{GS} showed any nonnegative solution must be trivial. In a recent paper~\cite{CM}, Catino-Monticelli carried out a systematic study of above mentioned equations (\ref{E1},\ref{E2}, \ref{E3}) on complete manifolds with nonnegative Ricci curvature. Among many results, one particular case is a full extension of Caffarelli-Gidas-Spruck's result in dimension three to complete manifolds with nonnegative Ricci curvature.

Inspired by Catino-Monticelli's work, we aim to study the Liouville equation (\ref{E1}) on complete surfaces with nonnegative Gauss curvature, we are able to connect the asymptotic behavior of the solution with the underlying manifold.

To be more precise, let $(M,g)$ be a complete surface with nonnegative Gauss curvature. We study the Liouville equation 
\begin{align}\label{EL}
\Delta_g u+e^{2u}=0 
\end{align}
on $M$. A solution is called to have \textit{finite total curvature} if $\int_{M} e^{2u} dg<\infty$. 

In view of the Cohn-Vossen splitting Theorem, a complete surface $(M,g)$ with nonnegative Gauss curvature is 
\begin{itemize}
    \item either isometric to the flat cylinder $S^1\times \mathbb{R}$ or the flat M\"{o}bius band (unorientable),
    \item or diffeomorphic to $(\mathbb{R}^2, g_0)$.
\end{itemize}
In the latter case, by Huber's theorem \cite{Hu}, $(M,g)$ is conformal to $(\mathbb{R}^2, g_0)$.

Without loss of generality, we assume from now on that $M$ is orientable. In the former case, we have the following classification of solutions to (\ref{EL}).
\begin{theorem} \label{T1}
Let $u$ be a  solution of (\ref{EL}) with finite total curvature on the flat cylinder $(S^1\times \mathbb{R}, g_{prod})$. 
Then there exists $\mu\in[0,\infty)$ and $\beta\in (-1,\infty)$, such that either $\beta$ is an integer or $\mu=0$, and up to a rescaling, we have
\begin{align*}
e^{2u(z)}=\frac{(2\beta+2)^2|z|^{2\beta+2}}{(|1+\mu z^{\beta+1}|^2+|z|^{2\beta+2})^2} \quad \text{on } (\mathbb{C}-\{0\}, \frac{1}{|z|^2}g_0).
\end{align*}

\end{theorem}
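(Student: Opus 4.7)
The plan is to exploit the conformal equivalence $(S^1\times\mathbb{R}, g_{prod}) \cong (\mathbb{C}^*, |z|^{-2} g_0)$ via $z = e^{-t+i\theta}$ to translate (\ref{EL}) into a Liouville equation on the punctured plane. Setting $w(z) := u - \log|z|$, equation (\ref{EL}) becomes the standard Liouville equation $\Delta_0 w + e^{2w} = 0$ on $\mathbb{C}^* = \mathbb{C}\setminus\{0\}$, and the finite total curvature condition becomes $\int_{\mathbb{C}^*} e^{2w}\, dxdy < \infty$; geometrically, $g' = e^{2w} g_0$ is a constant Gauss curvature $1$ metric of finite area on $\mathbb{C}^*$.

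The first analytical step is the asymptotic analysis of $w$ at both punctures. Because $e^{2w}\in L^1$ and satisfies a Liouville equation, a Brezis--Merle / Chen--Li type argument should yield the conical leading behavior
\begin{align*}
w(z) &= (\beta_0 - 1)\log|z| + O(1) &&\text{as } z\to 0,\\
w(z) &= -(\beta_\infty+1)\log|z| + O(1) &&\text{as } z\to \infty,
\end{align*}
for some $\beta_0,\beta_\infty>0$; equivalently, $u$ decays linearly on each end of the cylinder. Compactifying by adding cone points at $0$ and $\infty$ of angles $2\pi\beta_0$ and $2\pi\beta_\infty$, Gauss--Bonnet gives area $= 2\pi(\beta_0+\beta_\infty)$.

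To extract the explicit formula, I would lift $w$ to the universal cover of $\mathbb{C}^*$ via $\zeta \mapsto z = e^\zeta$. On the simply connected plane $\mathbb{C}_\zeta$, the classical Liouville representation produces a meromorphic $F : \mathbb{C} \to \overline{\mathbb{C}}$ with $\tilde u(\zeta) = \log\frac{2|F'(\zeta)|}{1+|F(\zeta)|^2}$. Descent to the cylinder requires the monodromy relation $F(\zeta+2\pi i) = M \circ F(\zeta)$ for some $M\in PSU(2)$; post-composing by an element of $PSU(2)$ (an isometry of the round $S^2$, hence leaving the metric invariant) allows me to take $M$ diagonal, $M(w) = e^{i\theta}w$. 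Then $F(\zeta) = e^{a\zeta} H(e^\zeta)$ with $a\in\mathbb{R}$ satisfying $e^{2\pi i a} = e^{i\theta}$ and with $H$ meromorphic on $\mathbb{C}^*$. Finite area prohibits essential singularities of $H$ at $0$ or $\infty$, so $H$ is rational; combining this with the conical asymptotics above and with smoothness of $u$ on the cylinder (which forces $F$ to have no critical points on $\mathbb{C}^*$), I conclude that $\beta_0 = \beta_\infty =: \beta+1$ and that $F$ is Möbius-equivalent to $z^{\beta+1}$. After exhausting the remaining rescaling/rotation symmetry $z \mapsto \lambda z$ of the cylinder, this gives $F(z) = z^{\beta+1}/(1+\mu z^{\beta+1})$ with $\mu \geq 0$. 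Single-valuedness of the metric factor $4|F'|^2/(1+|F|^2)^2$ on $\mathbb{C}^*$ is automatic when $\mu = 0$, while for $\mu\neq 0$ it forces $\beta+1 \in \mathbb{Z}_{>0}$. Plugging $F$ back into the Liouville representation yields the stated expression for $e^{2u}$.

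The main obstacle will be the sharp asymptotic analysis at the punctures --- ruling out logarithmic or slow polynomial corrections beyond the conical leading term --- together with the careful handling of the multi-valued developing map when $\beta$ is non-integer. Once these are in place, the classification of $F$ parallels Troyanov's analysis of spherical metrics on $S^2$ with two conical singularities.
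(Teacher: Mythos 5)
Your proposal follows the same overall reduction as the paper: identify the flat cylinder conformally with $(\mathbb{C}^*,|z|^{-2}g_0)$, set $w=u-\log|z|$ so that $w$ solves the standard Liouville equation on $\mathbb{C}^*$ with $\int e^{2w}<\infty$, establish conical asymptotics at both punctures, and then classify. The asymptotics step is the same in spirit: the paper cites Chou--Wan at $z=0$ and applies a Kelvin transform to pass the estimate to $z=\infty$; your appeal to a ``Brezis--Merle / Chen--Li type argument'' is the same tool. Where you diverge is the final classification. The paper simply cites Troyanov's Theorem~II (with Chen--Li for the small-angle subcase), whereas you propose to re-derive it from scratch by lifting to the universal cover, representing the metric by a developing map $F$, diagonalizing the monodromy, and writing $F(\zeta)=e^{a\zeta}H(e^\zeta)$ with $H$ rational.

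That outline is a legitimate route (it is essentially how Troyanov's result is proved), but the decisive step is asserted rather than argued. From the monodromy decomposition alone you only get $\beta_0-\beta_\infty\in\mathbb{Z}$ (a zero or pole of $H$ at $0$ or $\infty$ shifts the exponent by an integer), not $\beta_0=\beta_\infty$; and the precise form of $H$ is far from automatic. What actually closes the argument is the constraint you mention in passing --- $F$ has no critical points --- which, after writing $H=P/Q$, forces $aPQ+z(P'Q-PQ')$ to be a monomial $cz^k$ on $\mathbb{C}$. Normalizing so $P(0),Q(0)\neq 0$ gives $k=0$, and then the degree count forces either $P,Q$ constant (the $\mu=0$ football) or $a=\deg Q-\deg P\in\mathbb{Z}$, after which a cascade of coefficient identities pins down $H(z)=1/(1+\mu z^{a})$ up to Möbius equivalence and yields $\beta_\infty=a=\beta_0$. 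None of this bookkeeping appears in your sketch, and ``I conclude that $\beta_0=\beta_\infty$'' at that point is exactly the content of Troyanov's theorem. Either carry out this computation or, as the paper does, cite Troyanov directly once the conical asymptotics at both ends are in hand.
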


The classification result is in fact not new. Since the Gauss curvature for the flat cylinder is identically zero, (\ref{EL}) has a geometric meaning that the conformal metric $e^{2u} g_{prod}$ has Gauss curvature $1$. Note the flat cylinder is conformal to $(\mathbb{R}^2\setminus \{0\}, g_0)$, thus (\ref{EL}) can be translated to the Liouville equation on $\mathbb{R}^2\setminus \{0\}$. Then the theorem follows from a combination of results of  Chou-Wan ~\cite[Theorem 5]{CW} , Chen-Li~\cite[Theorem 3.1]{CL2} and Troyanov~\cite[Theorem II]{Tr}.

Our main theorem is the following  rigidity result.

\begin{theorem} \label{T2}
Let $u$ be a solution of (\ref{EL}) with finite total curvature on a complete surface $(M,g)$ with nonnegative Gauss curvature. Let $r(x)$ be the distance function on $M$ with respect to a fixed point. If $u(x)\geq -2 \ln r(x)+o(\ln r(x))$, for $r(x)$ large,  then $(M,g)$ must be isometric to $(\mathbb{R}^2, g_0)$. Moreover, $-2$ is optimal in the sense that there exists non flat $(M,g)$ which admits solutions verifying $u(x)\sim \gamma \ln r(x)$ for any $\gamma<-2. $
\end{theorem}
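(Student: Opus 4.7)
The plan is first to eliminate the cylinder via Theorem \ref{T1}, then work in a conformal chart $g = e^{2v}|dz|^2$ on $\mathbb{R}^2$, compute the asymptotics of $v$, $u$, and of the distance $r(x)$, and finally apply Sturm comparison on the Jacobi equation for the metric $\tilde g := e^{2u}g$ to force flatness.

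For the cylinder step, Theorem \ref{T1} provides every finite-total-curvature solution explicitly, and a direct inspection shows $u(x) \sim -(\beta+1) r(x)$ as $r(x)\to\infty$, i.e., linear decay in the cylinder distance, which is incompatible with $u \geq -2\ln r + o(\ln r)$. Hence $M$ is diffeomorphic to $\mathbb{R}^2$; by Huber's theorem $g = e^{2v}|dz|^2$ for some smooth $v$ on $\mathbb{R}^2$, with $K_g \geq 0 \Leftrightarrow -\Delta v \geq 0$. Setting $\alpha := \tfrac{1}{2\pi}\int(-\Delta v)\,dx \in [0,1]$ (Cohn-Vossen), Huber's asymptotic gives $v(x) = -\alpha\ln|x| + O(1)$ and hence $r(x) \sim c|x|^{1-\alpha}$, $\ln r(x) \sim (1-\alpha)\ln|x|$ when $\alpha<1$ (the borderline $\alpha = 1$ produces subquadratic area growth of $g$-balls and leads to an elementary contradiction). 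Writing the Liouville equation as $\Delta u + e^{2u+2v} = 0$ on $\mathbb{R}^2$ with $A := \int_M e^{2u}\,dg < \infty$, the Green's representation together with Liouville's theorem for harmonic functions (the harmonic remainder of $u$ must be bounded above for $e^{2u+2v}$ to be integrable) yields $u(x) = -\tfrac{A}{2\pi}\ln|x| + O(1)$. Combining, the hypothesis $u \geq -2\ln r + o(\ln r)$ is equivalent to
\[
A \leq 4\pi(1-\alpha). \qquad (\ast)
\]

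For the rigidity, set $\tilde g := e^{2u}g$. Then $K_{\tilde g} = 1 + K_g e^{-2u} \geq 1$, and the asymptotics above imply that $\tilde g$ extends, when $\mathbb{R}^2$ is compactified to $S^2$, across the added point as a conformal metric with a conical singularity there of cone angle $2\pi\sigma$, where $\sigma = A/(2\pi)+\alpha-1 > 0$ (positivity comes from the integrability condition $A > 2\pi(1-\alpha)$). In geodesic polar coordinates $(r,\theta)$ based at this cone vertex, write $\tilde g = dr^2 + f(r,\theta)^2\,d\theta^2$, with $f(0,\theta) = 0$ and $\partial_r f(0,\theta) = \sigma$. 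Sturm comparison applied to $\partial_r^2 f + K_{\tilde g}\, f = 0$ with $K_{\tilde g} \geq 1$ and the model $f_0(r) = \sigma\sin r$ gives $f(r,\theta) \leq \sigma\sin r$, with first zero of $f$ at or before $r = \pi$ (a Bonnet-Myers-type bound from the cone vertex). Integrating,
\[
A = \int d\tilde g \leq \int_0^{2\pi}\!\!\int_0^\pi \sigma\sin r\,dr\,d\theta = 4\pi\sigma,
\]
which, after substituting $\sigma$, rearranges to $A \geq 4\pi(1-\alpha)$. If $\alpha > 0$, then $K_{\tilde g} > 1$ on a nonempty open set, strict Sturm comparison forces strict inequality, hence $A > 4\pi(1-\alpha)$, contradicting $(\ast)$. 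Thus $\alpha = 0$, $v$ is harmonic, and completeness of $e^{2v}g_0$ on $\mathbb{R}^2$ forces $v$ constant (any complete flat metric conformal to $(\mathbb{R}^2,g_0)$ equals it up to scaling, via an affine holomorphic isometry). Therefore $(M,g)$ is isometric to $(\mathbb{R}^2, g_0)$.

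The main technical obstacle is the rigorous extension of $\tilde g$ to an honest metric on $S^2$ with a conical singularity of the precise angle $2\pi\sigma$ at infinity, together with the validity of Sturm and Bonnet-Myers starting from a cone vertex. These are standard in the theory of surfaces with conical singularities, but require careful attention to the conformal-factor asymptotics at $\infty$, to the metric completion of $(M, \tilde g)$, and to Jacobi fields satisfying the prescribed cone-angle initial condition.
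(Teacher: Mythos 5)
Your proposal is a genuinely different route from the paper's: instead of Brendle's isoperimetric inequality on $(M,g)$ combined with the Chen--Li level-set trick, you compactify $\tilde g = e^{2u}g$ to a conical metric on $S^2$ and run Sturm/Bonnet--Myers comparison from the cone vertex. That scheme is appealing because, once the asymptotics at $\infty$ are pinned down, it delivers both the bound $A \geq 4\pi(1-\alpha)$ and the rigidity from one geometric picture. Unfortunately the inputs you feed into it are not available in the generality required, and that is exactly where the difficulty of the theorem lies.

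The serious gap is in the pointwise asymptotics you assert for $v$ and $u$. You claim $v(x) = -\alpha\ln|x| + O(1)$ ``by Huber's asymptotic'' and $u(x) = -\tfrac{A}{2\pi}\ln|x| + O(1)$ via ``Green's representation together with Liouville's theorem for harmonic functions.'' Neither statement holds without additional decay hypotheses on the curvature. For the conformal factor $v$, the sharp unconditional result is Arsove--Huber's: $v(x)/\ln|x| \to -m$ only as $x\to\infty$ \emph{outside an exceptional set thin at infinity}; no uniform $O(1)$ remainder is available in general. For $u$, Cheng--Lin's pointwise asymptotic requires a quantitative decay of the weight $e^{2v}$, which is precisely what cannot be assumed here; the paper says this explicitly before Lemma \ref{Lem.upper bound}. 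Your Liouville-theorem shortcut for the harmonic part $h = u+v$ is also too quick: $h$ is controlled only through integrals, not pointwise, and turning that into $h \equiv \text{const}$ requires the Hadamard factorization argument for the entire function $e^{h}$ of order zero (Claim~2 in the proof of Lemma \ref{Lem.upper bound}), not Liouville for harmonic functions. Without these, the constant in $(\ast)$, the cone angle $\sigma = A/(2\pi)+\alpha-1$, and the compactification of $\tilde g$ into a genuine conical metric at $\infty$ (which you flag but underestimate) are all unsupported. The paper evades all of this: it proves only an \emph{integral-average} upper bound for $u$ over shrinking balls $B_{|x|^{-\sigma}}(x)$ (Lemma \ref{Lem.upper bound}); it relates $\ln r(x)$ to $\ln|x|$ directly via the Hartman/Li--Tam theorem (Lemma \ref{Lem.HLT}) rather than through pointwise asymptotics of $v$; and it replaces your compactification step with Brendle's isoperimetric inequality (Lemma \ref{lemma.Brendle}) together with a level-set ODE for $F(t) = \int_{\{u>t\}} e^{2u}\,dg$ (Proposition \ref{prop.upper}). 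Your outline would need to be rebuilt along one of these lines — or supplemented with an independent, correct derivation of the asymptotics — before the Sturm step could apply.
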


A similar result has been proved in ~\cite[Theorem 1.10]{CM}. Our contribution here has two-fold. On the one hand, our assumption on $u$ is weaker than that in ~\cite{CM} and our treatment emphasizes the analysis of asymptotic behavior of the solution which helps to identify the threshold where the rigidity occurs. On the other hand, by setting the stage on the complete surfaces with nonnegative Gauss curvature, we unite two works of Chen-Li ~\cite{CL1} and ~\cite{CL2}. 

As mentioned above, we focus on the asymptotic behavior of the solution. If $(M,g)$ is conformal to $(\mathbb{R}^2, g_0)$, we assume $g=e^{2f}g_0$, then (\ref{EL}) becomes 
\begin{align} \label{EP}
\Delta u+e^{2f} e^{2u}=0 \quad \text{on $\mathbb{R}^2$.}
\end{align}
This is the so-called the prescribing Gauss curvature equation on $\mathbb{R}^2$, which has been investigated intensively over past few decades. Under suitable decay assumption of $e^{2f}$ near infinity, Cheng-Lin ~\cite[Theorem 1.1]{ChL} showed that the solution $u$ of (\ref{EP}) has the following asymptotic behavior 
\[
\lim_{x\to \infty} \frac{u(x)}{\ln |x|} =-\frac{1}{2\pi} (\int_{\mathbb{R}^2} e^{2f}e^{2u} dx)
\]
if and only if $\int_{\mathbb{R}^2} e^{2f}e^{2u} dx<\infty$. However, a priori, there is no any decay control for $e^{2f}$. In fact, $f$ satisfies an equation of similar type
\[
\Delta f+ K_g e^{2f}=0,
\]
where $K_g$ is the Gauss curvature of $g$. The only information here is that $K_g\geq 0$. Nevertheless, using Arsove-Huber's result~\cite{AH}, there exists an $m\in[0,1]$ and an exceptional set $E$ which is thin at infinity such that 
\begin{align} \label{MQ}
\lim_{x\to \infty, x\notin E}\frac{u(x)}{\ln|x|}=\liminf_{x\to \infty} \frac{u(x)}{\ln|x|}=-m. 
\end{align}
Here the thinness of a set at infinity is a concept concerning the logarithmic capacity. For a conformal metric  $e^{2f}g_0$ on $\mathbb{R}^n (n\geq 3)$ with nonnegative Ricci curvature,  Ma-Qing~\cite{MQ} obtained a similar asymptotic behavior for the conformal factor $f$. 

While Cheng-Lin and Arsove-Huber's works are main analytical inspirations for our work, we also benefit from two interesting geometric ingredients: the first is Li-Tam's work~\cite{LT} on comparison between the intrinsic distance and the Euclidean distance on $(\mathbb{R}^2, e^{2f}g_0)$ (see Lemma \ref{Lem.HLT}) and the second is an isoperimetric inequality on surfaces with nonnegative Gauss curvature established recently by Brendle~\cite{B} (see Lemma \ref{lemma.Brendle}). 

We present proofs in the next section. The natural analog on the study of equation (\ref{E2}) on higher dimensional complete locally conformally flat manifolds with nonnegative Ricci curvature will appear in a future work. 

\textbf{Acknowledgement:} Both authors wish to thank Prof. Shiguang Ma for helpful discussions.

\section{Proof of Main theorem}\label{S2}
\begin{proof}[Proof of Theorem~\ref{T1}]
Note that the flat cylinder $\mathbb{S}^1\times \mathbb{R}$ is conformal to $(\mathbb{R}^2\setminus\{0\},g_0)$ since 
\begin{align*}
    dt^2+d\theta^2=\frac{1}{r^2}dr^2+d\theta^2=\frac{1}{r^2}g_0,
\end{align*}
by setting $t=\ln r$. Let $e^{2w(x)}=\frac{1}{|x|^2}e^{2u(x)}$, then equation $\Delta_g u+e^{2u}=0$ is equivalent to 
\begin{align}\label{eq.on pct plane}
    \begin{cases}
        \Delta w+e^{2w}=0\quad \mathrm{on }\  \mathbb{R}^2\setminus \{0\},\\
        \int_{\mathbb{R}^2}e^{2w(x)}dx<\infty.
    \end{cases}
\end{align}
Chou-Wan's complex analysis argument ~\cite[Theorem 5]{CW} shows that
\begin{align*}
    w(x)=\beta_1\ln |x|+O(1)\quad \mathrm{as}\ x\to 0,\  \text{for some}\  \beta_1>-1.
\end{align*}
Let $\Tilde{w}(x)=w(\frac{x}{|x|^2})-2\ln|x|$, it is easy to see that $\Tilde{w}$ satisfies
\begin{align*}
    \begin{cases}
        \Delta \Tilde{w}+e^{2\Tilde{w}}=0\quad \mathrm{on }\  \mathbb{R}^2\setminus \{0\},\\
        \int_{\mathbb{R}^2}e^{2\Tilde{w}(x)}dx<\infty.
    \end{cases}
\end{align*}
Apply Chou-Wan's asymptotic result ~\cite[Theorem 5]{CW} to $\Tilde{w}$ and trace back to $w$, we get
\begin{align*}
    w(x)=\beta_2\ln |x|+O(1)\quad \mathrm{as}\ x\to \infty,\  \text{for some}\  \ \beta_2<-1.
\end{align*}
Therefore, $w(x)$ is a solution of (\ref{eq.on pct plane}) with conical singularities at $x=0$ and $x=\infty$. Hence the classification result of Troyanov ~\cite[Theorem II]{Tr}  yields, there exists $\mu\in[0,\infty)$ and $\beta\in(-1,\infty)$ such that either $\beta$ is an integer or $\mu=0$, and  up to a rescaling, we have
\begin{align*}
e^{2w(z)}=\frac{(2\beta+2)^2|z|^{2\beta}}{(|1+\mu z^{\beta+1}|^2+|z|^{2\beta+2})^2} \quad \text{on }\, \mathbb{C}-\{0\}.
\end{align*}
Then the desired result follows since $e^{2u(z)}=|z|^2 e^{2w(z)}$.
Note if both cone angles are less than $2\pi$ ($\beta\in(-1,0)$), Chen-Li~\cite[Theorem 3.1]{CL2} also obtained such classification.
\end{proof}

In this following, we give the complete proof of Theorem~\ref{T2}.

First we exclude the case of flat cylinder in Theorem~\ref{T2}. Suppose $u$ is a finite total curvature solution of (\ref{EL}) on the flat cylinder, then Theorem \ref{T1} implies
\begin{align*}
    u(x)\sim -(\beta+1) r(x),\quad \text{for $r(x)$ large,}
\end{align*}
where $\beta>-1$ is a constant. This contradicts with the assumption that $u(x)\geq -2\ln r(x)+o(\ln r(x))$ for $r(x)$ large. In conclusion, $(M,g)$ can not be the flat cylinder and thus is conformal to $(\mathbb{R}^2,g_0)$ by Cohn-Vossen splitting theorem and Huber's theorem.

Now we write $g=e^{2f}g_0$, then the finite total curvature solution $u$ of (\ref{EL}) becomes
\begin{align}\label{Efinite}
\begin{cases}
    \Delta u+e^{2f} e^{2u}=0 \quad \text{on $\mathbb{R}^2$,}\\
    \int_{\mathbb{R}^2}e^{2f+2u}dx<\infty.
\end{cases}
\end{align}

To fix the notation, we consider the quantity
\begin{align}\label{alpha}
    \alpha:=-\frac{1}{2\pi}\int_{\mathbb{R}^2}e^{2f+2u}dx.
\end{align}
The strategy of our proof is as follows: using the asymptotic lower bound assumption of the solution $u$, we establish a lower bound of $\alpha$ by analyzing carefully the asymptotic upper bound of the solution to (\ref{Efinite}). On the other hand, with the help of Brendle's isoperimetric inequality, we prove that the reversed inequality still holds. Hence the equality is obtained and the rigidity part of the isoperimetric inequality brings the rigidity of the underlying manifold.

First, we aim at getting the lower bound of $\alpha$. It is tempting to obtain a pointwise upper bound of the solution $u$ to (\ref{Efinite}) im terms of $\alpha$ so that the lower bound assumption on $u$ could imply immediately the lower bound of $\alpha$.  However, due to the lack of a uniform asymptotic behavior of the conformal factor $f$, it's impossible to derive such a pointwise bound for $u$. Instead, we shall give an upper bound of the integral average of $u$ on small balls. The argument is based on that of ~\cite{ChL}.

\begin{lemma}\label{Lem.upper bound}
    Let $(M,g)=(\mathbb{R}^2,e^{2f}g_0)$ be a complete surface with nonnegative Gauss curvature. Assume $u\in C^2(\mathbb{R}^2)$ is a solution to (\ref{Efinite}). Then for any $\epsilon>0$, $\sigma>0$, there exists $R>0$ such that for $|x|\geq R$ and $\rho=|x|^{-\sigma}$, there holds
    \begin{align*}
        \frac{1}{\pi \rho^2}\int_{B_{\rho}(x)}u(y)dy\leq (\alpha+\epsilon)\ln |x| +C,
    \end{align*}
    where $\alpha$ is given by (\ref{alpha}) and $C$ is a constant depends on $\epsilon, \sigma, R$.
\end{lemma}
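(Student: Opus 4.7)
The strategy follows Cheng--Lin~\cite{ChL}: decompose $u$ via a Newtonian potential and analyze the pieces separately. I would introduce the renormalized potential
\[
\Phi(x):=-\frac{1}{2\pi}\int_{\mathbb{R}^2}\ln\frac{|x-y|}{|y|+1}\,e^{2f(y)+2u(y)}\,dy,
\]
where the denominator is chosen so that the integrand is $O(|x|/|y|)$ as $|y|\to\infty$, and the integral converges absolutely against $e^{2f+2u}\in L^1$. Then $\Delta\Phi=-e^{2f+2u}$, so $h:=u-\Phi$ is an entire harmonic function on $\mathbb{R}^2$.

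The bulk of the analysis is to estimate $\frac{1}{\pi\rho^2}\int_{B_\rho(x)}\Phi\,dz$ by Fubini. The inner average $\frac{1}{\pi\rho^2}\int_{B_\rho(x)}\ln|z-y|\,dz$ equals $\ln|x-y|$ when $y\notin B_\rho(x)$, and equals $\ln\rho+(|y-x|^2-\rho^2)/(2\rho^2)\in[\ln\rho-\tfrac{1}{2},\ln\rho]$ when $y\in B_\rho(x)$. Fix $\epsilon>0$ and pick $T$ such that $\int_{|y|>T}e^{2f+2u}<\epsilon$. I would then split the $y$-integral into three regions: (i) $B_T(0)$, where $\ln|x-y|=\ln|x|+o(1)$ and the partial mass tends to $-\alpha$ as $T\to\infty$, producing the main $(\alpha+o(1))\ln|x|$ contribution; (ii) the near-$x$ region $B_{|x|/2}(x)$, of mass less than $\epsilon$ and integrand $O((1+\sigma)\ln|x|)$ after the $\ln(|y|+1)$ subtraction, giving $O(\epsilon\ln|x|)$; and (iii) the complementary far region, where the renormalization yields an $O(1)$ integrand for $|y|\gg|x|$ and an $O(\ln|x|)$ integrand on mass less than $\epsilon$ when $|y|\sim|x|$. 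Summing produces $\frac{1}{\pi\rho^2}\int_{B_\rho(x)}\Phi\,dz\leq(\alpha+\epsilon)\ln|x|+C$.

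Since $h$ is entire harmonic, $\frac{1}{\pi\rho^2}\int_{B_\rho(x)}h\,dz=h(x)$ by the mean value property, so the proof reduces to bounding $h(x)\leq\epsilon\ln|x|+C$ for $|x|$ large. This is the main obstacle, since entire harmonic functions on $\mathbb{R}^2$ may grow polynomially and we have no pointwise decay of $f$. Integrating the equation on concentric disks gives $\bar u(R)=\alpha\ln R+o(\ln R)$, and combining with $\bar h(R)\equiv h(0)$ for entire harmonic $h$ forces $\bar\Phi(R)$ to have the same asymptotics. To upgrade this averaged control of $h$ to a pointwise bound, I would exploit the small-mass condition $\int_{B_\rho(x)}e^{2f+2u}\to 0$ through a Brezis--Merle-type local oscillation estimate on $u$ (and hence on $h$ via $h=u-\Phi$), and then invoke the classical Liouville principle that any entire harmonic function on $\mathbb{R}^2$ of at most logarithmic growth is constant; this closes the argument.
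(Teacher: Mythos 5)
Your plan shares the paper's basic architecture — introduce a logarithmic potential $\Phi$ with $\Delta\Phi = -e^{2f+2u}$, estimate its ball average directly, and observe that $h := u - \Phi$ is entire harmonic — and your estimates on $\Phi$ and the averaging identity via Green's representation are sound. You also correctly identify the real obstacle: controlling the harmonic remainder $h$. But the way you propose to close that step has a genuine gap, and it is precisely here that the paper brings in the geometric hypothesis.

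Your plan observes that $\bar u(R) = \alpha\ln R + o(\ln R)$ and $\bar h(R)\equiv h(0)$, and then hopes to upgrade this averaged control to a pointwise sublogarithmic bound on $h$ by a ``Brezis--Merle-type local oscillation estimate.'' This does not obviously work, for two reasons. First, the averaged identity gives no new information on $h$: a nonconstant entire harmonic function such as $h(x,y)=x$ already has $\bar h(R)\equiv 0$. Second, a Brezis--Merle local estimate for $\Delta u = -e^{2f}e^{2u}$ controls $\|u\|_{L^\infty(B_{\rho/2}(x))}$ only in terms of the local mass $\int_{B_\rho(x)} e^{2f+2u}$ \emph{and} the local $L^1$ norm of $u$ — but that $L^1$ norm is exactly the quantity the lemma asks you to bound, so the argument is circular. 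On top of that, the coefficient $e^{2f}$ has no a priori upper bound, so the hypotheses of Brezis--Merle are not in place either. The paper avoids all of this by proving a much stronger statement: $u + v$ (your $-h$) is actually \emph{constant}. The crucial input is that the conformal factor $f$ is superharmonic (because the Gauss curvature is nonnegative), so by the Arsove--Huber theorem one has a pointwise \emph{lower} bound $e^{2f}\geq C_\epsilon|x|^{-2m-2\epsilon}$ at infinity for some $m\in[0,1]$; combining this with the finite-total-curvature hypothesis $\int e^{2f+2u}<\infty$ and the upper bound on $v$ from claim (1) shows that the entire function $F$ with $\operatorname{Re}\log F = 2(u+v)$ satisfies $\int_{|z|\geq R_0}|F(z)|\,|z|^{2\alpha-2m}\,dx<\infty$, hence $M(\rho)\leq C\rho^{2m-2\alpha}$, hence $F$ has order zero. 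The Hadamard factorization theorem then forces $F$ (which is zero-free) to be constant. Your remark that ``we have no pointwise decay of $f$'' is true but beside the point: what is used is a pointwise \emph{lower} bound on $e^{2f}$, which is a direct consequence of $K_g\geq 0$. Without invoking this geometric ingredient, I do not see how your argument bounds $h$, and this is the step that needs repair.
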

\begin{proof}
    Construct an auxiliary function 
    \begin{align*}
        v(x)=\frac{1}{2\pi}\int_{\mathbb{R}^2}\psi(y)\ln(\frac{|x-y|}{|y|})dy,
    \end{align*}
    where $\psi(y)=e^{2f(y)+2u(y)}$.

    The proof is mainly composed of three claims:
    \begin{enumerate}
        \item $v(x)\leq -\alpha \ln |x| +C$ for $|x|\geq 2$.
        \item $u+v$ is a constant.
        \item For any $\epsilon>0, \sigma>0$, there exists $R>0$ such that for $|x|\geq R$ and $\rho=|x|^{-\sigma}$, there holds
        \begin{align}\label{ineq.upper_u}
            u(x)\leq 
            (\alpha+\epsilon)\ln |x|
            +\frac{1}{2\pi}\int_{B_{\rho}(x)}\psi(y)\ln (\frac{|y|}{|x-y|}) dy+C,
        \end{align}
    \end{enumerate}

    Proof of claim (1): For fixed $x$ with $|x|\geq 2$,
    \begin{align*}
        2\pi v(x)&=\int_{T_1}\psi(y) \ln(\frac{|x-y|}{|y|})dy
        +\int_{T_2}\psi(y)\ln(\frac{|x-y|}{|y|})dy
        +\int_{T_3}\psi(y)\ln(\frac{|x-y|}{|y|}) dy\\
        &\overset{\text{def}}{=}I_1+I_2+I_3,
    \end{align*}
    where $T_1=\{y\!: |y|\leq 2\}, 
    T_2=\{y\!: |y-x|\leq \frac{|x|}{2}, |y|\!\geq 2\},
    T_3=\{y\!: |y-x|\geq \frac{|x|}{2}, |y|\geq 2\}$.

    Note that for $|x|\geq 2$ and $y\in T_1$, we have $\ln |x-y|\leq \ln (|x|+2)\leq \ln |x|+\ln 2$. Thus
    \begin{align*}
        I_1&=\int_{T_1}\psi(y)\ln |x-y|dy-\int_{T_1}\psi(y)\ln |y|dy\\
        &\leq \int_{T_1}\psi(y)(\ln |x|+\ln 2) dy-\int_{T_1}\psi(y)\ln|y| dy\\
        &=(\ln |x|)\int_{T_1}\psi(y) dy+C.
    \end{align*}

    Now for $y\in T_2$, we have $|x-y|\leq \frac{|x|}{2}\leq |y|$. Hence
    \begin{align*}
        I_2\leq 0.
    \end{align*}

    For $y\in T_3$ and $|x|\geq 2$, there holds $|x-y|\leq |x|+|y|\leq |x||y|$. Therefore
    \begin{align*}
        I_3\leq (\ln |x|)\int_{T_3}\psi(y) dy.
    \end{align*}

    We conclude that
    \begin{align*}
        2\pi v(x)=I_1+I_2+I_3\leq -2\pi \alpha \ln |x| +C.
    \end{align*}
    The proof of claim (1) is finished.

    Proof of claim (2): It is easy to see that $\Delta v=e^{2f+2u}$ and $u+v$ is a harmonic function on $\mathbb{R}^2$. Hence there exists an entire function $f(z)$ such that Re$f=2(u+v)$. Let $F(z)=e^{f(z)}$. Clearly, by claim $(1)$ we get
    \begin{align*} 
        |F(z)|=e^{2u+2v}\leq C|z|^{-2\alpha}e^{2u},
    \end{align*}
    for $|z|\geq 2$. Using the lower bound (\ref{MQ}) for the conformal factor $f$ ($e^{2f}\geq |z|^{-2m}$), we get that for some $R_0$ large enough,
    \begin{align*}
        \int_{|z|\geq R_0} |F(z)||z|^{2\alpha}|z|^{-2m} dx
        \leq 
        C\int_{|z|\geq R_0} e^{2u} e^{2f} dx<\infty.
    \end{align*}

    Let $M(\rho)=\max_{|z|=\rho}|F(z)|$, we shall show that $M(\rho)\leq C\rho^{2m-2\alpha}$ for $\rho\geq R_0+1$. In fact, assume $|z_0|=\rho$ and $M(\rho)=|F(z_0)|$. The mean value property implies
    \begin{align*}
        |F(z_0)|
        \leq \frac{1}{\pi}\int_{B_1(z_0)}|F(z)| dx
        \leq \frac{1}{\pi}\int_{\rho-1\leq |z|\leq \rho+1}|F(z)|dx.
    \end{align*}
    Hence we get
    \begin{align*}
        M(\rho)\rho^{2\alpha-2m}
        &\leq \frac{1}{\pi}\int_{\rho-1\leq |z|\leq \rho+1}|F(z)|\rho^{2\alpha-2m}dx\\
        &\leq \frac{1}{\pi}\int_{\rho-1\leq |z|\leq \rho+1}|F(z)| |z|^{2\alpha-2m}(\frac{\rho}{\rho+1})^{2\alpha-2m}dx\\
        &\leq \frac{2^{2m-2\alpha}}{\pi}\int_{|z|\geq \rho-1}|F(z)||z|^{2\alpha-2m}dx<\infty.
    \end{align*}

    Therefore, the order of the entire function $F(z)$ is
    \begin{align*}
        \lambda:=\limsup_{\rho\to \infty} \frac{\ln \ln M(\rho)}{\ln \rho}=0.
    \end{align*}
    By a theorem of Hadamard (see Theorem 8 of p. 209 in ~\cite{A}), we conclude that the genus of $F(z)$ is zero and $F(z)$ is a constant since $F$ has no zeros. The proof of claim (2) is completed.

    Proof of claim (3): For any $\epsilon >0,\sigma >0$, choose $R>0$ large enough such that
    \begin{align*}
        (\sigma+1)\int_{|y|\geq R}\psi(y)dy\leq \pi \epsilon,
    \end{align*}
    where $\psi(y)=e^{2f(y)+2u(y)}$.
    By claim (2), we have
    \begin{align*}
        2\pi u(x)
        &=C+\int_{\mathbb{R}^2}\psi(y) \ln (\frac{|y|}{|x-y|}) dy\\
        &=C+\int_{\tilde{T}_1}\psi(y) \ln (\frac{|y|}{|x-y|}) dy
        +\int_{\tilde{T}_2}\psi(y) \ln (\frac{|y|}{|x-y|}) dy
        +\int_{\tilde{T}_3}\psi(y) \ln (\frac{|y|}{|x-y|}) dy\\
        &\overset{\text{def}}{=}\tilde{I}_1+\tilde{I}_2+\tilde{I}_3,
    \end{align*}
    where $\tilde{T}_1=\{y\!: |y|\!\leq R\}, 
    \tilde{T}_2=\!\{y\!: |y-x|\leq \frac{|x|}{2}, |y|\!\geq R\},
    \tilde{T}_3=\!\{y\!: |y-x|\geq \frac{|x|}{2}, |y|\!\geq R\}$.

    Now for $|x|\geq \frac{R^2}{R-1}$ and $y\in \tilde{T}_1$, we have $\ln |x-y|\geq \ln(|x|-R)\geq \ln |x|-\ln R$. Thus
    \begin{align*}
        \tilde{I}_1&=\int_{\tilde{T}_1}\psi(y)\ln |y| dy
        -\int_{\tilde{T}_1}\psi(y)\ln |x-y| dy\\
        &\leq C-(\ln |x|)\int_{\tilde{T}_1}\psi(y) dy
        +(\ln R)\int_{\tilde{T}_1}\psi(y) dy\\
        &\leq -(\ln |x|)\int_{\tilde{T}_1}\psi(y) dy+C.
    \end{align*}

     To estimate $\tilde{I}_2$, let $\tilde{T}^{\sigma}=\{y: |y-x|\leq |x|^{-\sigma}, |y|\geq R\}$. Then we have
    \begin{align*}
        \tilde{I}_2&=\int_{\tilde{T}^{\sigma}}\psi(y) \ln(\frac{|y|}{|x-y|})dy
        +\int_{|x|^{-\sigma}\leq |y-x|\leq \frac{|x|}{2}, |y|\geq R}\psi(y) \ln(\frac{|y|}{|x-y|})dy\\
        &\leq \int_{\tilde{T}^{\sigma}}\psi(y) \ln(\frac{|y|}{|x-y|})dy
        +\int_{|y|\geq R} \psi(y) \ln(\frac{\frac{3}{2}|x|}{|x|^{-\sigma}}) dy\\
        &\leq \int_{|y-x|\leq |x|^{-\sigma}}\psi(y) \ln(\frac{|y|}{|x-y|})dy
        +(\sigma+1)\int_{|y|\geq R}\psi(y)dy 
        +C.
    \end{align*}

Now for $y\in \tilde{T}_3$, one easily gets $|y|\leq 4|x-y|$. Therefore
\begin{align*}
    \tilde{I}_3=\int_{\tilde{T}_3}\psi(y)\ln(\frac{|y|}{|x-y|})dy
    \leq
    (\ln 4)\int_{\tilde{T}_3}\psi(y) dy\leq C.
\end{align*}

In conclusion, there holds 
\begin{align*}
    2\pi u(x)&=\tilde{I}_1+\tilde{I}_2+\tilde{I}_3\\
    &\leq C-(\ln |x|)\int_{|y|\leq R}\psi(y)dy
    +\int_{|y-x|\leq |x|^{-\sigma}}\psi(y)\ln (\frac{|y|}{|x-y|})dy
    +(\sigma+1)\int_{|y|\geq R}\psi(y)dy\\
    &\leq C+2\pi(\alpha+\epsilon)\ln |x|
    +\int_{|y-x|\leq |x|^{-\sigma}}\psi(y)\ln (\frac{|y|}{|x-y|})dy,
\end{align*}
for $|x|\geq \frac{R^2}{R-1}$.
The proof of claim (3) is completed.

Finally, we give the upper bound of the integral average of $u$. By Green's formula, we get
\begin{align*}
    u(x)=\frac{1}{\pi\rho^2}\int_{B_{\rho}(x)}u(y) dy+\frac{1}{2\pi}\int_{B_{\rho}(x)}\psi(y) \ln (\frac{\rho}{|x-y|}) dy,
\end{align*}
for every $x\in \mathbb{R}^2$ and $\rho>0$. Combined with (\ref{ineq.upper_u}), we have for any $\epsilon>0, \sigma>0$, there exists $R>0$ such that for $|x|\geq R$ and $\rho=|x|^{-\sigma}$,
\begin{align}\label{ineq.upper_average}
    \frac{1}{\pi\rho^2}\int_{B_{\rho}(x)}u(y) dy
    \leq 
    (\alpha+\epsilon)\ln |x|
    +\frac{1}{2\pi}\int_{B_{\rho}(x)}\psi(y) \ln(\frac{|y|}{\rho}) dy+C.
\end{align}
Note that $\frac{|y|}{\rho}\leq \frac{|x|+\rho}{\rho}=|x|^{\sigma+1}+1\leq |x|^{\sigma+2}$ for $|x|$ large enough, the second term in the right hand side of (\ref{ineq.upper_average}) could be estimated as
\begin{align*}
    \frac{1}{2\pi}\int_{B_{\rho}(x)}\psi(y)\ln (\frac{|y|}{\rho}) dy
    \leq
    \frac{\sigma+2}{2\pi}(\ln |x|)\int_{|y|\geq R/2}\psi(y) dy\leq \epsilon \ln|x|,
\end{align*}
for $|x|\geq R$ provided $R$ is large enough.
Inserting this into (\ref{ineq.upper_average}), the proof of the lemma is completed.
\end{proof}

To derive the lower bound of $\alpha$, we need a lower bound of $u$ in terms of the Euclidean distance $\ln |x|$ rather than the intrinsic distance $\ln r(x)$ appeared in the hypotheses of Theorem \ref{T2}. Fortunately, the comparison of these two distances is established by Li-Tam \cite[Corollary 3.3]{LT}. Moreover, Hartman \cite[Theorem 7.1]{Ha} revealed the connection between this limit and the asymptotic volume ratio of the manifold. Their results are combined as follows.

\begin{lemma}[Hartman, Li-Tam]\label{Lem.HLT}
    Let $(\mathbb{R}^2,e^{2f}g_0)$ be a complete manifold with nonnegative Gauss curvature $K$. Then
    \begin{align*}
        \lim_{x\to \infty}\frac{\ln r(x)}{\ln |x|}=1-\frac{1}{2\pi}\int_{\mathbb{R}^2}K dg=\beta,
    \end{align*}
    where 
    \begin{align*}
        \beta:=\lim_{t\to \infty} \frac{Area(B(p,t))}{\pi t^2} \in [0,1]
    \end{align*}
    is the asymptotic volume ratio of the manifold $(\mathbb{R}^2,e^{2f}g_0)$.
\end{lemma}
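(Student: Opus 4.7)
The statement is a direct combination of two results from the literature, so my plan is mainly to invoke them correctly rather than prove something genuinely new. First I would apply Hartman's Theorem 7.1 in \cite{Ha} to the complete noncompact surface $(\mathbb{R}^2,e^{2f}g_0)$ with $K\geq 0$: its hypothesis of finite total absolute curvature is automatic here, since the Cohn--Vossen inequality forces $\int_{\mathbb{R}^2}K\,dg\leq 2\pi$ for a complete $K\geq 0$ surface diffeomorphic to $\mathbb{R}^2$. Hartman's theorem then produces the asymptotic volume identity
\begin{equation*}
\beta\;=\;\lim_{t\to\infty}\frac{\mathrm{Area}(B(p,t))}{\pi t^2}\;=\;1-\frac{1}{2\pi}\int_{\mathbb{R}^2}K\,dg,
\end{equation*}
which in particular lands in $[0,1]$. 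Second, I would invoke Li--Tam's Corollary 3.3 in \cite{LT}, which compares the intrinsic geodesic distance $r(x)$ with the Euclidean distance $|x|$ for a complete conformally flat surface with nonnegative Gauss curvature, yielding
\begin{equation*}
\lim_{x\to\infty}\frac{\ln r(x)}{\ln|x|}\;=\;\beta.
\end{equation*}
Chaining the two identities gives the lemma.

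It is worth recording the geometric content behind each invocation. Hartman's identity is a quantitative Gauss--Bonnet statement: applied to the large geodesic ball $B(p,t)$ one has $\int_{B(p,t)}K\,dg+\int_{\partial B(p,t)}k_g\,ds=2\pi$, and as $t\to\infty$ the failure of $\mathrm{Area}(B(p,t))$ to equal $\pi t^2$ is exactly governed by the Gauss curvature that has been absorbed. Li--Tam's comparison then transports this area asymptotic through the conformal factor $e^{2f}$: matching the geodesic-ball area $\sim \beta\pi r^2$ against the Euclidean area of the sublevel set of $|x|$ yields $r(x)\sim |x|^{\beta}$ at infinity, which is exactly the logarithmic ratio asserted.

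The main thing to be careful about is verifying that our setting genuinely satisfies the hypotheses of both \cite{Ha} and \cite{LT}, and that the two uses of the symbol $\beta$ (Hartman's asymptotic volume ratio and Li--Tam's logarithmic comparison constant) refer to the same quantity. Since both rely on completeness plus $K\geq 0$, which we have, and both limits are well defined thanks to the Cohn--Vossen bound, this step is essentially bookkeeping rather than an obstacle; the proof of the lemma then consists of nothing more than writing these two citations side by side.
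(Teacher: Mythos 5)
Your proposal is correct and matches the paper's approach exactly: the paper also treats this lemma as a direct combination of Hartman's Theorem 7.1 (identifying $1-\frac{1}{2\pi}\int_{\mathbb{R}^2}K\,dg$ with the asymptotic volume ratio $\beta$) and Li--Tam's Corollary 3.3 (the logarithmic comparison of intrinsic and Euclidean distance), with the Cohn--Vossen inequality guaranteeing the relevant quantities are well defined. The only caveat is presentational: Li--Tam's result is naturally stated in terms of $1-\frac{1}{2\pi}\int K\,dg$ rather than $\beta$, so strictly speaking Hartman supplies the identification with the volume ratio, which you then substitute — but you do chain the two identities, so this is a non-issue.
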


Given this asymptotic behavior of $r(x)$, the a prior assumption on $u$ could be applied to obtain the lower bound of $\alpha$ in terms of the asymptotic volume ratio.

\begin{proposition}\label{prop.lower}
    Let $(\mathbb{R}^2,e^{2f}g_0)$ be a complete surface with nonnegative Gauss curvature. Let $u$ be a solution of (\ref{Efinite}). Assume
    \begin{align*}
        u(x)\geq -2\ln r(x)+o(\ln r(x)),
    \end{align*}
    for $r(x)$ large. Then
    \begin{align*}
        \alpha\geq -2\beta,
    \end{align*}
    where $\alpha$ is given by (\ref{alpha}) and $\beta$ is the asymptotic volume ratio of $(\mathbb{R}^2,e^{2f}g_0)$.
\end{proposition}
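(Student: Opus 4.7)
The strategy is to pair the upper bound on the integral average of $u$ furnished by Lemma~\ref{Lem.upper bound} with a matching lower bound extracted from the hypothesis via the Hartman--Li--Tam comparison, and then let $|x|\to\infty$ to force $\alpha\geq -2\beta$.

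First I would convert the intrinsic lower bound into a Euclidean one. Since $(\mathbb{R}^2,e^{2f}g_0)$ is complete, intrinsic balls $\{r\leq K\}$ are compact in $\mathbb{R}^2$, so $r(y)\to\infty$ exactly when $|y|\to\infty$, and the hypothesis on $u$ is eventually applicable in the Euclidean variable. By Lemma~\ref{Lem.HLT}, $\ln r(y)/\ln|y|\to\beta$, so for every $\eta>0$ and all sufficiently large $|y|$, $\ln r(y)\leq (\beta+\eta)\ln|y|$, and the error $o(\ln r(y))$ is absorbed into $o(\ln|y|)$. Combining this with $u(y)\geq -2\ln r(y)+o(\ln r(y))$ yields the pointwise Euclidean bound
\begin{align*}
u(y)\geq -2\beta\ln|y|+o(\ln|y|)\qquad\text{as }|y|\to\infty.
\end{align*}

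Next, I would fix $\sigma>0$, set $\rho=|x|^{-\sigma}$, and observe that every $y\in B_\rho(x)$ satisfies $\ln|y|=\ln|x|+o(1)$ as $|x|\to\infty$. Averaging the Euclidean lower bound over $B_\rho(x)$ then gives
\begin{align*}
\frac{1}{\pi\rho^2}\int_{B_\rho(x)}u(y)\,dy\geq -2\beta\ln|x|+o(\ln|x|).
\end{align*}
Chaining this with the upper bound $(\alpha+\epsilon)\ln|x|+C$ from Lemma~\ref{Lem.upper bound}, dividing through by $\ln|x|$, and letting first $|x|\to\infty$ and then $\epsilon\to 0$ delivers $\alpha\geq -2\beta$.

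The main obstacle is keeping the two halves of the estimate genuinely compatible. The hypothesis lives in the intrinsic variable $r(x)$ while Lemma~\ref{Lem.upper bound} lives in the Euclidean variable $|x|$, and the Hartman--Li--Tam comparison is only a limit statement, not a quantitative one. The care needed is therefore in upgrading $\ln r(y)/\ln|y|\to\beta$ to a uniform inequality on the small ball $B_\rho(x)$ so that the $o(\ln r)$ error translates to a bona fide $o(\ln|x|)$ after averaging. Because $B_\rho(x)$ shrinks like $|x|^{-\sigma}$, this uniformity is almost automatic, and the rest of the argument is a direct comparison of two expressions linear in $\ln|x|$.
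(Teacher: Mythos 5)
Your proof is correct and follows essentially the same strategy as the paper: convert the intrinsic hypothesis to a Euclidean lower bound via Lemma~\ref{Lem.HLT}, average over the shrinking ball $B_\rho(x)$, and chain with the upper bound of Lemma~\ref{Lem.upper bound} before letting $|x|\to\infty$ and $\epsilon\to 0$. The only cosmetic difference is that the paper keeps the $\epsilon$ from the Hartman--Li--Tam comparison explicit (writing $(-2\beta-2\epsilon)\ln|x|$) rather than folding it into the $o(\ln|x|)$ term, but both formulations carry exactly the one-sided (\emph{liminf}) information needed.
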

\begin{proof}
    By our assumption on $u$ and Lemma \ref{Lem.HLT}, we get for any $\epsilon>0$, there exits $R>0$ such that for $r(x)\geq R$,
    \begin{align*}
        u(x)\geq -2\ln r(x) +o(\ln r(x))\geq (-2 \beta-2\epsilon)\ln |x|+o(\ln |x|).
    \end{align*}

    While Lemma \ref{Lem.upper bound} yields that for any $\epsilon>0, \sigma>0$, there exists $R>0$ such that for $|x|\geq R$ and $\rho=|x|^{-\sigma}$,
    \begin{align*}
        \frac{1}{\pi \rho^2}\int_{B_{\rho}(x)}u(y)dy\leq (\alpha+\epsilon)\ln |x| +C,
    \end{align*}
    where $C$ is a constant depends on $\epsilon,\sigma, R$.

    We conclude that for any $\epsilon>0, \sigma>0$, there exists $R>0$ such that for $|x|\geq R$ and $\rho=|x|^{-\sigma}$,
    \begin{align*}
        (\alpha+\epsilon)\ln |x|+C
        &\geq 
        \frac{1}{\pi\rho^2}\int_{B_{\rho}(x)}u(y)dy\\
        &\geq 
        (-2\beta-2\epsilon)\frac{1}{\pi \rho^2}\int_{B_{\rho}(x)}\ln |y|dy +o(\ln |x|)\\
        &\geq (-2\beta-2\epsilon)(\ln |x|-\epsilon)+o(\ln |x|).
    \end{align*}
    Let $x\to \infty$, we get $\alpha+\epsilon\geq -2\beta-2\epsilon$. Since $\epsilon$ could be arbitrarily small, we get 
    \begin{align*}
        \alpha\geq -2\beta.
    \end{align*}
\end{proof}

We shall see that the reversed inequality also holds and thus the equality is obtained. For this, we need the isoperimetric inequality on nonnegatively curved surfaces established by Brendle \cite[Corollary 1.3]{B}, and it also helps to get the rigidity of the underlying manifold in our setting.
\begin{lemma}[Brendle]\label{lemma.Brendle}
    Let $(M^2,g)$ be  a complete noncompact manifold with nonnegative Gauss curvature. Let $D$ be a compact domain in $M$ with boundary $\partial D$. Then
    \begin{align*}
        L(\partial D)^2\geq 4\pi \beta\  A(D),
    \end{align*}
    where $L(\partial D)$ and $A(D)$ represent the length of $\partial D$ and the area of $D$, respectively, and $\beta$ is the asymptotic volume ratio of $(M,g)$. Moreover, the equality holds if and only if $(M,g)$ is isometric to Euclidean space and $D$ is a ball.
\end{lemma}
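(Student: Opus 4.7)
The statement is cited directly from Corollary 1.3 of Brendle \cite{B}, so the authors invoke it as a black box; nevertheless I will sketch the route one would take to prove it, which is Brendle's Alexandrov--Bakelman--Pucci (ABP) method specialized to dimension two.

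The overall plan is to introduce a Neumann potential $\varphi$ on $D$ and use a contact argument to compare the area of $D$ with the area of a large geodesic ball in $M$, extracting the factor $\beta$ in the limit. Step one is to solve the Neumann problem
\[
\Delta_g\varphi=\frac{L(\partial D)}{A(D)}\ \text{in}\ D,\qquad \langle\nabla\varphi,\nu\rangle=1\ \text{on}\ \partial D,
\]
which is solvable because the compatibility condition follows from the divergence theorem. Step two is to pass to the contact set $U=\{x\in D:\operatorname{Hess}\varphi(x)\geq 0\}$; since the trace of $\operatorname{Hess}\varphi$ equals $L(\partial D)/A(D)$, the arithmetic--geometric inequality on its two eigenvalues gives the pointwise Jacobian bound $\det(\operatorname{Hess}\varphi)\leq (L(\partial D)/(2A(D)))^2$ on $U$.

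The decisive step is the ABP contact argument together with a rescaling to infinity: one shows that the normal map $\Phi_\lambda(x)=\exp_x(\lambda\nabla\varphi(x))$, for a parameter $\lambda>0$, restricted to $U$ covers a geodesic ball $B(p,R_\lambda)$ in $M$ whose radius $R_\lambda$ grows linearly in $\lambda$. For each target $y\in B(p,R_\lambda)$, a minimization argument applied to an auxiliary function built from $\varphi$ produces a contact point $x\in U$ with $\Phi_\lambda(x)=y$. Because nonnegative Gauss curvature makes $\exp$ area-nonexpanding (the two-dimensional Bishop--Gromov), integrating the scaled Jacobian bound over $U$ and comparing with $\operatorname{Area}(B(p,R_\lambda))$, then dividing by $\pi R_\lambda^2$ and invoking Lemma~\ref{Lem.HLT} in the limit $\lambda\to\infty$, extracts $\beta$ on one side and yields the sharp inequality $L(\partial D)^2\geq 4\pi\beta\,A(D)$.

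For the rigidity claim, equality throughout forces $\operatorname{Hess}\varphi=\tfrac{L(\partial D)}{2A(D)}g$ on all of $D$, equality in the area-nonexpanding property of $\exp$ along every relevant geodesic, and $\beta=1$. The equality case of volume comparison then forces the Gauss curvature to vanish identically, so $(M,g)$ is flat; together with $\beta=1$ and the Cohn--Vossen splitting theorem, this identifies $(M,g)$ with Euclidean space, and the Hessian identity for $\varphi$ forces $D$ to be a round disk. The main technical obstacle is precisely the extraction of $\beta$ in the $\lambda\to\infty$ limit: the classical finite-$\lambda$ ABP method only yields Euclidean constants, and the contribution of \cite{B} lies in making this asymptotic step sharp under the sole hypothesis of nonnegative Gauss curvature.
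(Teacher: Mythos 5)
The paper states this lemma as an external citation from Brendle \cite[Corollary 1.3]{B} and offers no proof of its own; you correctly recognize this and treat it as a black box. Your sketch of Brendle's ABP route --- Neumann potential $\varphi$ with $\Delta\varphi=L/A$, transport map $\Phi_\lambda=\exp(\lambda\nabla\varphi)$ with a contact/covering argument, arithmetic--geometric bound on the Hessian eigenvalues, Bishop--Gromov control of the exponential Jacobian under nonnegative curvature, and the $\lambda\to\infty$ rescaling that surfaces the asymptotic volume ratio --- is a faithful outline of \cite{B}. Two small imprecisions in the sketch: the contact set in Brendle's argument is not taken a priori as $\{\operatorname{Hess}\varphi\geq 0\}$, but arises from the second-order condition at the minimizer of an auxiliary function built from $\varphi$ and the squared distance to the target (and the relevant Jacobian is that of $\Phi_\lambda$, whose leading $\lambda^2$-coefficient is what the AM--GM inequality controls); and the asymptotic volume ratio enters directly from its definition in the $\lambda\to\infty$ limit, with no need to invoke Lemma~\ref{Lem.HLT}, which in the present paper serves the unrelated purpose of comparing intrinsic and Euclidean distance.
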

Now with the help of Lemma \ref{lemma.Brendle}, one could mimic the argument in \cite{CL1} to give the upper bound of $\alpha$.
\begin{proposition}\label{prop.upper}
    Let $(\mathbb{R}^2,e^{2f}g_0)$ be a complete surface with nonnegative Gauss curvature. Let $u$ be a solution of (\ref{Efinite}). Then 
    \begin{align*}
        \alpha\leq -2\beta,
    \end{align*}
    where $\alpha$ is given by (\ref{alpha}) and $\beta$ is the asymptotic volume ratio of $(\mathbb{R}^2,e^{2f}g_0)$.
\end{proposition}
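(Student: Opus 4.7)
The plan is to adapt Chen--Li's classical isoperimetric argument, with Brendle's inequality (Lemma~\ref{lemma.Brendle}) playing the role of the Euclidean isoperimetric inequality. For $t\in\mathbb{R}$ I set
\begin{align*}
A(t) = \mathrm{Area}_g(\{u>t\}),\quad L(t) = \mathrm{Length}_g(\{u=t\}),\quad \phi(t) = \int_{\{u>t\}} e^{2u}\,dg;
\end{align*}
one checks $A(t)\leq -2\pi\alpha\, e^{-2t}$, so these are finite. By Sard's theorem and the coarea formula, for a.e.\ $t$,
\begin{align*}
\phi(t) = \int_{\{u=t\}}|\nabla_g u|\,d\sigma_g,\quad -A'(t)=\int_{\{u=t\}}\frac{d\sigma_g}{|\nabla_g u|},\quad \phi'(t) = e^{2t}A'(t),
\end{align*}
where the first identity comes from integrating $\Delta_g u+e^{2u}=0$ over $\{u>t\}$ and applying the divergence theorem.

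Next I would combine Cauchy--Schwarz, $L(t)^2\leq \phi(t)(-A'(t))$, with Brendle's inequality $L(t)^2\geq 4\pi\beta A(t)$; multiplying by $e^{2t}$ this produces
\begin{align*}
-\phi(t)\phi'(t)\geq 4\pi\beta\,e^{2t}A(t),\qquad\text{i.e.,}\qquad -\tfrac12(\phi^2)'(t)\geq 4\pi\beta\,e^{2t}A(t).
\end{align*}
An integration by parts in $\phi(t)=\int_t^{\infty} e^{2s}(-A'(s))\,ds$, using $e^{2s}A(s)\leq\phi(s)\to 0$ as $s\to\infty$, yields the companion identity
\begin{align*}
\phi(t) = e^{2t}A(t) + 2\int_t^{\infty} e^{2s}A(s)\,ds.
\end{align*}
Integrating the differential inequality from $t$ to $\infty$ (where $\phi\to 0$) and substituting this identity produces the key algebraic estimate
\begin{align*}
\phi(t)^2 \geq 4\pi\beta\bigl(\phi(t)-e^{2t}A(t)\bigr),
\end{align*}
valid for every $t$. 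Since $\phi(t)\to -2\pi\alpha$ as $t\to-\infty$, the proposition reduces to verifying that $\liminf_{t\to-\infty} e^{2t}A(t)=0$, after which $(-2\pi\alpha)^2\geq 4\pi\beta(-2\pi\alpha)$ together with $-2\pi\alpha>0$ delivers $\alpha\leq -2\beta$.

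The main technical point is thus this asymptotic vanishing at $-\infty$. I would establish it from the integrability $\int_{-\infty}^{t_0} e^{2s}A(s)\,ds <\infty$ (a consequence of the same IBP above, since $\phi$ is bounded) combined with the monotonicity of $A(s)$ in $s$. If $\limsup_{t\to-\infty} e^{2t}A(t)=L>0$, choose $s_n\to-\infty$ with $e^{2s_n}A(s_n)\geq L/2$ and $s_{n+1}<s_n-1$; monotonicity then forces $e^{2s}A(s)\geq e^{-2}L/2$ on the disjoint intervals $[s_n-1,s_n]$, which summed contradicts integrability. One subtlety worth flagging is ensuring that Brendle's isoperimetric inequality applies to the super-level sets $\{u>t\}$, which I would handle by truncation (intersecting with large geodesic balls and passing to the limit) whenever $\{u>t\}$ fails to be precompact.
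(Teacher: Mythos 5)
Your argument is correct and follows essentially the same route as the paper: Chen--Li's level-set/isoperimetric scheme applied to $\Omega_t=\{u>t\}$, with Brendle's inequality $L(\partial\Omega_t)^2\geq 4\pi\beta\,A(\Omega_t)$ replacing the Euclidean isoperimetric inequality, yielding the same differential inequality $-\tfrac12(\phi^2)'(t)\geq 4\pi\beta\,e^{2t}A(t)$. The only difference is at the final step: the paper integrates over all $t\in\mathbb{R}$ and invokes the layer-cake identity $\int_{-\infty}^{\infty}2e^{2t}A(t)\,dt=\int_M e^{2u}\,dg$, whereas you integrate from $t$ to $\infty$, perform an explicit integration by parts, and then send $t\to-\infty$ after checking $\liminf_{t\to-\infty}e^{2t}A(t)=0$ — a slightly longer but equivalent bookkeeping of the same computation.
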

\begin{proof}
    Consider $F(t):=\int_{\Omega_t}e^{2u}dg$, where $\Omega_t=\{x:u(x)>t\}$ is the upper level set of $u$.
    
    The finite total curvature assumption $\int_M e^{2u}dg<\infty$ and the Minkowski inequality yield
    \begin{align*}
        A(\Omega_t)<\infty,
    \end{align*}
 where $A(\Omega_t)$ represents the area of $\Omega_t$ in $(\mathbb{R}^2,g=e^{2f}g_0)$.

    It follows from the equation (\ref{EL}) and the divergence theorem that
    \begin{align*}
        F(t)=\int_{\Omega_t}e^{2u}dg=-\int_{\Omega_t}\Delta u dg=-\int_{\partial \Omega_t}<\nabla u,\eta>dS_g=\int_{\partial \Omega_t}|\nabla u| dS_g.
    \end{align*}
    By the coarea formula, there holds
    \begin{align*}
        F'(t)=-\int_{\partial \Omega_t}\frac{e^{2u}}{|\nabla u|}dS_g=-e^{2t}\int_{\partial \Omega_t}\frac{1}{|\nabla u|}dS_g.
    \end{align*}
    Then the H{\"o}lder inequality and the isoperimetric inequality (Lemma \ref{lemma.Brendle}) imply
    \begin{align}\label{ineq.ode}
    \begin{split}
        (F^2(t))'
        &=-2e^{2t}\int_{\partial \Omega_t}|\nabla u|dS_g   \int_{\partial \Omega_t}\frac{1}{|\nabla u|}dS_g\\
        &\leq -2e^{2t}L(\partial \Omega_t)^2\\
        &\leq -8\pi \beta\ e^{2t}A(\Omega_t).
    \end{split}
    \end{align}
Note that the isoperimetric inequality still holds for noncompact regions whose area are finite, since the length of its boundary must be infinite by the completeness of $(\mathbb{R}^2,e^{2f}g_0)$.
    
    Finally integrating $(\ref{ineq.ode})$ from $-\infty$ to $\infty$ yields
    \begin{align*}
        -(\int_M e^{2u}dg)^2
        &\leq -8\pi \beta
        \int_{- \infty}^{\infty}
        e^{2t}A(\{x:e^{2u(x)}>e^{2t} \})dt\\
        &=-4\pi \beta\int_0^{\infty}A(\{x:e^{2u(x)}>\lambda\})d\lambda\\
        &=-4\pi \beta \int_Me^{2u}dg.
    \end{align*}
    Thus the desired inequality holds.
\end{proof}

\begin{proof}[Proof of Theorem \ref{T2}]
By Proposition \ref{prop.lower} and Proposition \ref{prop.upper}, we get
\begin{align*}
    \alpha=-2\beta.
\end{align*}
Inspecting the proof of Proposition \ref{prop.upper} shows that $L(\partial \Omega_t)^2=4\pi \beta\  A(\Omega_t)$ for every $t\in\mathbb{R}$. Hence Lemma \ref{lemma.Brendle} tells $(\mathbb{R}^2,e^{2f}g_0)$ must be isometric to the Euclidean space $(\mathbb{R}^2,g_0)$.

To see the sharpness of the coefficient $-2$ in the assumption $u(x)\geq -2\ln r(x)+o(\ln r(x))$, consider the following 
conformal flat manifolds:

Choose the conformal factor $
    e^{2f(x)}=\frac{\gamma}{(1+|x|^2)^{2-2\gamma}}.$
 Then for $\gamma \in [\frac{1}{2},1)$, $(\mathbb{R}^2,g=e^{2f}g_0)$ is a complete surface with nonnegative Gauss curvature $K_g=\frac{4(1-\gamma)}{\gamma(1+|x|^2)^{2\gamma}}$. 

Take $e^{2u(x)}=\frac{4}{(1+|x|^2)^{2\gamma}}$, it is easy to see that $\Delta u+e^{2f}e^{2u}=0$. In other words,
\begin{align*}
    \Delta_g u+e^{2u}=0.
\end{align*}
Moreover, $\int_{\mathbb{R}^2}e^{2u}dg=\int_{\mathbb{R}^2}\frac{4\gamma}{(1+|x|^2)^2} dx=4\pi\gamma<\infty.$

Direct computation shows
\begin{align*}
    &\lim_{x\to \infty}\frac{\ln r(x)}{\ln |x|}=2\gamma-1, \quad 
    \mathrm{for }\  \gamma\in
    (\frac{1}{2},1).\\
    &\lim_{x\to \infty}\frac{r(x)}{\ln |x|}=1,\quad 
    \mathrm{for }\  \gamma=\frac{1}{2}
\end{align*}
Thus for $\gamma\in(\frac{1}{2},1)$, we have
\begin{align*}
    u(x)\sim -2\gamma\ln |x| \sim -\frac{2\gamma}{2\gamma-1}\ln r(x),
\end{align*}
where $-\frac{2\gamma}{2\gamma-1}\in(-\infty,-2)$.

In conclusion, for any $k<-2$, there exists a complete surface $(\mathbb{R}^2,e^{2f}g_0)$ with nonnegative Gauss curvature which admits a finite total curvature solution $u$ of (\ref{EL}) with $u(x)\sim k\ln r(x)$. 

\end{proof}

We conclude this paper with following remark. The main theorem states the rigidity of the underlying manifold under the assumption $u(x)\geq -2 \ln r(x)+o(\ln r(x))$. However, on the other end, we cannot expect such rigidity as illustrated by examples above. More precisely, when $\gamma=\frac{1}{2}$, it readily follows that the solution $u$ decays linearly with respect to the distance induced by the metric. Hence one cannot distinguish the flat cylinder by linear decay condition on the solution.  

    Nevertheless, when the solution decays sufficiently fast, we can get the volume growth control of the underlying manifold. We record here as a result of independent interest. 

\begin{proposition} \label{P1}
    Let $(\mathbb{R}^2,e^{2f}g_0)$ be a complete surface with nonnegative Gauss curvature. Let $u$ be a solution of $(\ref{Efinite})$ satisfying
    \begin{align*}
        \liminf_{x\to \infty}\frac{u(x)}{\ln r(x)}=-\infty.
    \end{align*}
    Then the asymptotic volume ratio of $(\mathbb{R}^2,e^{2f}g_0)$ is zero.
\end{proposition}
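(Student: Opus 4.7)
The plan is to prove the contrapositive: assuming the asymptotic volume ratio $\beta$ is strictly positive, I will show that $\liminf_{x\to\infty} u(x)/\ln r(x)$ is necessarily finite, contradicting the hypothesis.

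Under $\beta>0$, Lemma \ref{Lem.HLT} gives $\ln r(x)/\ln|x|\to\beta$, so in particular $\ln r(x)\to +\infty$ and the ratio $\ln|x|/\ln r(x)$ is uniformly close to the positive finite number $1/\beta$ for $|x|$ large. Meanwhile, the Arsove-Huber asymptotic (\ref{MQ}) supplies an $m\in[0,1]$ with $\liminf_{x\to\infty}u(x)/\ln|x|=-m$. The key step is then to factor
\[
\frac{u(x)}{\ln r(x)}=\frac{u(x)}{\ln|x|}\cdot\frac{\ln|x|}{\ln r(x)},
\]
and invoke the elementary identity: if $a_k\to L$ for a finite $L>0$ and $\{b_k\}$ is any real sequence, then $\liminf(a_k b_k)=L\,\liminf b_k$. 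Applying this with $a_k=\ln|x_k|/\ln r(x_k)$ and $b_k=u(x_k)/\ln|x_k|$ along a divergent sequence $\{x_k\}$ produces
\[
\liminf_{x\to\infty}\frac{u(x)}{\ln r(x)}=\frac{1}{\beta}\liminf_{x\to\infty}\frac{u(x)}{\ln|x|}=-\frac{m}{\beta}\geq -\frac{1}{\beta},
\]
which is finite, contradicting the hypothesis and forcing $\beta=0$.

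The only subtlety I foresee is the liminf-times-limit identity used above; since here $\liminf b_k=-m$ is finite and lies in $[-1,0]$, this is a routine $\varepsilon$-argument that separates the tail according to the sign of $b_k$ (when $b_k\geq 0$ one multiplies by $1/\beta-\varepsilon$, when $b_k<0$ by $1/\beta+\varepsilon$, and lets $\varepsilon\to 0$). Notably, neither Brendle's isoperimetric inequality nor the finer integral-average upper bound of Lemma \ref{Lem.upper bound} is needed for this proposition; the whole argument rests on the compatibility between the two a priori asymptotic statements Lemma \ref{Lem.HLT} and (\ref{MQ}), which is why the result is most naturally recorded as a standalone observation of independent interest.
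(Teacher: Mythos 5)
Your proof takes essentially the same route as the paper's: both rest on the factorization $u/\ln r = (u/\ln|x|)\cdot(\ln|x|/\ln r)$, together with Lemma~\ref{Lem.HLT}, which forces the second factor to tend to $1/\beta$ when $\beta>0$, so a finite $\liminf u/\ln|x|$ forces a finite $\liminf u/\ln r$. The gap is in your justification of the finiteness of $\liminf u/\ln|x|$. You cite~(\ref{MQ}), but despite the variable being typeset as $u$ in that display, (\ref{MQ}) is an assertion about the conformal factor $f$, not about the solution $u$: the preceding sentence applies Arsove--Huber to the equation $\Delta f + K_g e^{2f}=0$; the bound $m\in[0,1]$ stems from the Cohn--Vossen inequality $\int_M K_g\,dg\leq 2\pi$, which has no analogue for $\int e^{2f+2u}\,dx$; and the paper later explicitly refers to ``the lower bound (\ref{MQ}) for the conformal factor $f$'' in the proof of claim (2) of Lemma~\ref{Lem.upper bound}. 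The paper instead obtains the needed input, the pointwise bound $u(x)\geq \alpha\ln|x|+C$ for $|x|\geq 2$, directly from claims (1) and (2) of Lemma~\ref{Lem.upper bound}: claim (2) gives $u=-v+\mathrm{const}$ and claim (1) gives $v(x)\leq -\alpha\ln|x|+C$. Replacing your citation of (\ref{MQ}) by an appeal to those two claims (or, equivalently, applying Arsove--Huber directly to $u$, which is superharmonic with finite Riesz mass $\int e^{2f+2u}\,dx<\infty$, yielding $\liminf u/\ln|x|=\alpha$; note that $-\alpha$ need not lie in $[0,1]$, though finiteness is all you use) makes your argument coincide with the paper's.
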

\begin{proof}
    Suppose the asymptotic volume ratio $\beta$ is positive.  According to the claims (1) and (2) in Lemma \ref{Lem.upper bound}, there holds 
    \begin{align*}
        u(x)\geq \alpha\ln |x|+C,\quad \mathrm{for }\ |x|\geq 2,
    \end{align*}
    where $\alpha=-\frac{1}{2\pi}\int_{\mathbb{R}^2}e^{2f}e^{2u}dx$.
    Combined with Lemma \ref{Lem.HLT} one get
    \begin{align*}
        \liminf_{x\to\infty}\frac{u(x)}{\ln r(x)}
        =\liminf_{x\to\infty}\frac{u(x)}{\ln |x|}\frac{\ln |x|}{\ln r(x)}
        \geq \frac{\alpha}{\beta}>-\infty.
    \end{align*}
    This contradicts to the hypothesis. Hence we get $\beta=0$.
\end{proof}

We also have a partial converse to Proposition~\ref{P1}.

\begin{proposition} \label{P2}
    Let $(\mathbb{R}^2,e^{2f}g_0)$ be a complete surface with nonnegative and bounded Gauss curvature. Suppose the asymptotic volume ratio $\beta=0$, then there exists a solution of (\ref{Efinite}) satisfying
    \begin{align*}
        \lim_{x\to \infty}\frac{u(x)}{\ln r(x)}=-\infty.
    \end{align*}
\end{proposition}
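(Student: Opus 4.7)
The plan has two parts: first reduce the conclusion to the existence of any finite-total-curvature solution, then construct such a solution by a sub/super-solution scheme.

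\textbf{Reduction.} Given any solution $u$ of (\ref{Efinite}), the plan is to refine claims (1) and (2) in the proof of Lemma \ref{Lem.upper bound} so as to produce a matching lower bound for the Newtonian potential $v(x)=\frac{1}{2\pi}\int\psi(y)\ln\frac{|x-y|}{|y|}\,dy$, yielding $v(x)=-\alpha\ln|x|+o(\ln|x|)$ as $|x|\to\infty$, where $\alpha=-\frac{1}{2\pi}\int e^{2f+2u}\,dx<0$. Combined with $u=C-v$ from claim (2), this forces
\begin{equation*}
u(x)=\alpha\ln|x|+o(\ln|x|)\quad\text{as }|x|\to\infty.
\end{equation*}
By Lemma \ref{Lem.HLT}, the hypothesis $\beta=0$ translates to $\ln|x|/\ln r(x)\to+\infty$, so the above decomposition immediately gives $u(x)/\ln r(x)\to-\infty$.

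\textbf{Existence.} To produce a finite-total-curvature solution, I would set up a monotone-iteration scheme on exhausting balls $B_R$, using $\bar u(x)=-A\ln(1+|x|^2)$ as a global supersolution for $A$ chosen large, and a sufficiently negative constant (allowed to depend on $R$) on $B_R$ as subsolution. For $\bar u$ to be a supersolution one needs $-\Delta\bar u=\tfrac{4A}{(1+|x|^2)^2}$ to dominate $e^{2f+2\bar u}=e^{2f}(1+|x|^2)^{-2A}$, which requires a pointwise polynomial upper bound on $e^{2f}$. The bounded Gauss curvature hypothesis is precisely what supplies this: since $-\Delta_0 f=K_g e^{2f}\in L^1(\mathbb{R}^2)$ has total mass $\int K_g\,dg=2\pi$ (using $\beta=0$ and Lemma \ref{Lem.HLT}) and $K_g\in L^\infty$, a local Moser-type argument upgrades the Arsove--Huber asymptotic $f(x)/\ln|x|\to -1$ from ``off a thin set'' to a pointwise bound $e^{2f(x)}\le C(1+|x|^2)^{-1+\epsilon}$. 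Standard elliptic compactness combined with a diagonal extraction as $R\to\infty$ then yields a solution $u$ with $u\le\bar u$, hence $\int e^{2f+2u}\,dx\le\int e^{2f}(1+|x|^2)^{-2A}\,dx<\infty$.

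\textbf{Main obstacle.} The technical heart is the pointwise polynomial upper bound on $e^{2f}$: the raw output of (\ref{MQ}) only controls $f$ off a thin exceptional set, and in principle $f$ could spike on that set. The boundedness of $K_g$ in the hypothesis of Proposition \ref{P2} is exactly the tool that should prevent such spikes, via elliptic regularity applied to the semilinear equation $-\Delta_0 f=K_g e^{2f}$, but carrying out the Moser iteration across the thin set so as to get a clean everywhere-bound is the most delicate step, and where I expect to spend most of the technical effort.
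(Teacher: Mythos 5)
Your route is genuinely different from the paper's, and it has two significant gaps.

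\textbf{On the existence step.} The paper does not construct the solution by hand; it quotes two references. First, using $0\le K_g\le C$ and $\beta=0$ it invokes Bonini--Ma--Qing~\cite[Lemma 4.2]{BMQ} (which rests on Taliaferro's result~\cite{Ta}) to get the sharp pointwise decay $e^{2f}\sim |x|^{-2}$ at infinity; this is precisely the ``upgrade from off-a-thin-set to pointwise'' that you recognize as the technical heart, and it is already available in the literature with a better exponent than the $|x|^{-1+\epsilon}$ you aim for. Second, given this decay, McOwen's existence theorem~\cite[Theorem 1]{Mc} directly produces, for each $\alpha\in(-2,0)$, a solution of (\ref{Efinite}) with $u(x)=\alpha\ln|x|+O(1)$, and then Lemma~\ref{Lem.HLT} (valid at $\beta=0$) finishes the proof. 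Your substitute via monotone iteration has a genuine gap: your subsolution on $B_R$ is a constant depending on $R$, tending to $-\infty$, so the barriers give $u_R\le\bar u$ but no lower bound on $u_R$ over compact sets that is uniform in $R$. Superharmonicity of $u_R$ together with $u_R=\bar u$ on $\partial B_R$ only yields $u_R\ge\min_{\partial B_R}\bar u\to-\infty$, so the diagonal limit could degenerate to $-\infty$. Ruling this out is exactly the nontrivial content supplied by McOwen's theorem; your sketch does not replace it.

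\textbf{On the reduction step.} It is both unnecessary and not quite right. Unnecessary: once you have any solution $u\le\bar u=-A\ln(1+|x|^2)$, the one-sided bound $u(x)\le -2A\ln|x|+C$ for $|x|$ large, combined with $\ln|x|/\ln r(x)\to+\infty$ from Lemma~\ref{Lem.HLT}, already forces $u(x)/\ln r(x)\to-\infty$; you do not need the two-sided asymptotics $u=\alpha\ln|x|+o(\ln|x|)$. Not quite right: for an arbitrary solution, the claimed matching lower bound on $v$ (equivalently, the pointwise upper bound on $u$) is exactly what fails in general. The obstruction is the contribution of $\ln\frac{1}{|x-y|}$ near $y=x$, which requires decay of $\psi=e^{2f+2u}$ near $x$ that you do not have a priori; this is why Lemma~\ref{Lem.upper bound} in the paper only controls the integral average of $u$ rather than $u$ pointwise. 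You would find yourself reproving a version of Cheng--Lin's pointwise asymptotics, for which the $e^{2f}$ decay is again essential and which is a separate nontrivial step.

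In short, the proposal correctly identifies that pointwise decay of $e^{2f}$ is the crux, but both the reduction and the existence parts contain gaps; the paper's proof replaces both with direct citations to~\cite{BMQ} and~\cite{Mc}.
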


\begin{proof}
    Since $f$ satisfies 
    \[
    \Delta f +K e^{2f}=0,
    \] where $0\leq K\leq C $ by assumption. Based on a work of Taliaferro~\cite{Ta}, Bonini-Ma-Qing ~\cite[Lemma 4.2]{BMQ} showed that 
 \[
 e^{2f}\sim |x|^{-2(1-\beta)}=|x|^{-2} \quad \text{as $|x|\to \infty$}.
 \]
In view of the existence theorem of McOwen~\cite[Theorem 1]{Mc}, for any $\alpha\in(-2,0)$, there exists a solution $u$ of (\ref{Efinite}) verifying 
\[
u(x)\sim \alpha \ln |x|+O(1) \quad \text{at $\infty$}. 
\]
Since Lemma~\ref{Lem.HLT} still holds for $\beta=0$, the conclusion readily follows. 

\end{proof}

\end{document}